\documentclass{article}
\usepackage{amsmath}
\usepackage{float}
\usepackage{color}
\usepackage{footnote}
\usepackage{cases}
\usepackage{array}
\usepackage{bm}
\usepackage{enumitem}
\usepackage{cite}
\usepackage[bottom]{footmisc}
\usepackage{graphicx}
\usepackage{pfnote}
\usepackage{appendix}
\usepackage{caption}
\usepackage[labelformat=simple]{subcaption}
\usepackage{setspace}
\usepackage{amsthm}
\usepackage[normalem]{ulem}
\usepackage{afterpage}
\usepackage{soul}
\usepackage{centernot}
\usepackage{parskip}
\usepackage{amsfonts}
\usepackage{placeins}
\usepackage[nottoc]{tocbibind}
\usepackage{titlesec}
\numberwithin{equation}{section}
\newtheorem{definition}{Definition}
\newtheorem{theorem}{Theorem}
\newtheorem{lemma}{Lemma}
\newtheorem{corollary}{Corollary}

\makeatletter

\DeclareCaptionSubType*{figure}

\DeclareCaptionLabelFormat{opening}{Figure~#2:}
\captionsetup[subfigure]{labelformat=opening}
\makeatother

\def\inti#1{\int_{x_0}^{x_1}{#1}\mathop{}\!\mathrm{d}x}
\def\intii#1{\int_{I}{#1}\mathop{}\!\mathrm{d}x}
\def\intx#1{\int_X{#1}\!\mathop{}\!\mathrm{d}x}
\def\intia#1{\int_{0}^{d}{#1}\mathop{}\!\mathrm{d}x}
\def\norm#1#2#3{||{#1}||_{L^{#2}(#3)}}

\newcommand{\cmab}{${\cal C}_{1}(m,a,b)$ }


\title{Loss of Regularity in the $K(m,n)$ Equations}
\author{Alon Zilburg and Philip Rosenau\\
   {\em School of Mathematics, Tel Aviv University}\\
  {\em Tel Aviv, Israel 69978}}
\begin{document}
\maketitle
\begin{abstract}
 Using a priori estimates we prove that initially nonnegative, smooth and compactly supported solutions of the $K(m,n)$ equations must lose their smoothness in finite time. 
 Formation of a singularity is a prerequisite for the emergence of compactons.
\end{abstract}
\section{Introduction}\label{sec:intro}
In this work we consider the initial value problem for the $K(m,n)$ equations \cite{Rosenau01}
\begin{equation}\label{kmn}
u_t+(u^m)_x+(u^n)_{xxx}=0,\quad m,n\geq2
\end{equation}
which are the prototypical equations that support the formation and evolution of Compactons\cite{rosenau02}, solitary waves with compact support. Thus, for example, the $K(2,2)$ equation admits the compacton (anti-compacton) solution
\begin{equation}\label{k22c}
u^c(x,t)=\pm\frac{4\lambda}3\cos^2\left(\frac{x\mp\lambda t}{4}\right)H(2\pi-|x\mp\lambda t|)
\end{equation}
where $H(s)=\begin{cases}1, s\geq 0\\0, s<0\end{cases},$ is the Heaviside function and the $(-)$ branch refers to anti-compactons. We call the reader's attention to the jump in the second derivative of the compacton at its fronts.\\ Some of the presented results will be seen to apply to a more general class of nonlinear dispersion equations\cite{Rosenau03}
\begin{equation}\label{C1}
{\cal C}_{1}(m,a,b):~~ u_{t} +(u^m)_{x} + \frac{1}{b}[u^{a}(u^{b})_{xx}]_{x}=0,\quad a\geq0,m,b\geq2,n:=a+b
\end{equation}
which for $a=0$ reduces, up to normalization which plays no role in the following, to the $K(m,n)$ equations.\vspace{5mm}\\
In most of the previous works on $K(m,n)$ equations the focus has mainly been on the dynamics of compactons and their interaction. In contradistinction, we focus on the initial phase of evolution prior to the formation of compactons of an assumed smooth solution. We view this stage of evolution to be crucial for it propels the evolution toward gradients catastrophe which is necessary for the emergence of compactons.\\
The plan of the paper is as follows:\\
In section \ref{sec:estimates} we derive a priori estimates for strong solutions (see Definition 1) of the \cmab equations and prove that they remain confined within their initial support (Lemma \ref{support}). It is exactly the jump  discontinuity at the front of $u^c_{xx}$ which makes it possible for the $K(2,2)$ compacton to propagate.  We show that initially nonnegative (hereafter may be replaced by nonpositive) strong  solutions remain so for the duration of their existence (Lemma \ref{positivity}).\\
 Thereafter we derive an upper bound on the existence time of initially nonnegative and compactly supported strong solutions of the $K(m,n)$ equations (Theorem \ref{finitetime}). This bound depends on the mass $\int u\mathop{}\!\mathrm{d}x$ and the support of the initial excitation. The key point is that the motion of solution's center of mass is independent of the dispersion. Thus the convection forces solution's center of mass to move past any finite point in space which is in conflict with the confinement within the initial support. Existence of an additional conservation law for $K(n+1,n)$ equations enables us to improve the bound  (Corollary \ref{knp1n}). We shall also discuss the possibility of extending our results for additional classes of equations. In section \ref{sec:numerical} we illustrate our results with a few numerical examples. Section \ref{sec:summary} summarizes our results.\\
 Before we proceed with a short review of the studied equations we note a number of works similar in spirit to our own. A priori results concerning compactly supported solutions of (\ref{kmn}) were derived in \cite{ambrosesupport} using completely different methods from the ones to be presented. The first main result in \cite{ambrosesupport} was the conservation of the measure of solution's support (does not preclude its motion). Thus its subject matter is similar in part to our Lemma \ref{support}. Yet, it cannot substitute our Lemma \ref{support}, for it does not imply the confinement of evolution within the initial support and thus cannot be used for the proof of our main result, Theorem \ref{finitetime}. The second main result of \cite{ambrosesupport} was preservation of nonnegativity which is thus akin to our Lemma \ref{positivity}. Here we note that, unlike \cite{ambrosesupport}, our Lemma \ref{positivity} (which generalizes a result derived in \cite{k22finitedifference}) is not limited to the periodic case and our assumptions of regularity are laxer.\\
 With our results being a priori, it should be noted that very little is known about existence of solutions to degenerate dispersive equations. For strictly positive solutions, a general existence theory was derived in \cite{degendisp}. A special case (not of the $K(m,n)$ class) where existence of strong solutions could be proved for solutions which do touch the $x$-axis is found in \cite{ambrosesupport}. Unfortunately, these results have no bearing our own subject matter, compactly supported solutions of the $K(m,n)$ equations, where existence of solutions remains a completely open issue. In \cite{illposed}, a different aspect of well posedness was studied with numerical evidence which counterpoints a continuous dependence in the $H^2$ norm of $K(2,2)$ solutions on the initial condition.
\subsection{Certain features of the ${\cal C}_1(m,a,b)$ equations}
For future reference we start summarizing certain basic features of our problem \cite{Rosenau03,stationary}.
The ${\cal C}_1(m,a,b)$ equations admit {\it both traveling and stationary compactons}. The parameter
 $$\omega:=b+1-a$$
has a crucial impact on the dynamics; both numerical simulations and formal analysis have confirmed that for $\omega>0$, {\it traveling compactons} are evolutionary (note that  for $K(m,n)$, $\omega=n+1$), whereas if $\omega<0$, {\it stationary compactons} are evolutionary and emerge out of compact initial excitations \cite{Rosenau03,stationary}.
Thus, for instance, whereas the ${\cal C}_1(2,1,1)$ equation ($\omega=1$) supports evolutionary traveling compactons
\begin{equation}\label{c211c}
u(x,t)=2\lambda\cos^2\left(\frac{x-\lambda t}2\right)H\left(\pi-|x-\lambda t|\right),
\end{equation}
 in the ${\cal C}_1(4,3,1)$ case wherein $\omega=-1$, the stationary compactons
\begin{equation}\label{c431c}
u(x,t)=u_0\cos(x)H\left(\frac\pi 2-|x|\right),
\end{equation}
are the ones to emerge from a compact initial excitation. As to the regularity of
 the dispersive part of ${\cal C}_1(4,3,1)$ at compacton's edge, we note that since $\left[u^3u_{xx}\right]_x=\left[\frac 14(u^4)_{xx}-3u^2u_x^2\right]_x$, it is clear that all terms are well defined at the singularity.\vspace{5mm}\\
We shall focus on $\omega\geq2$ cases making the traveling compactons the main object of our interest. Formal analysis of the traveling wave equation of the \cmab equation \cite{stationary} shows that, if a compacton solution $u^c$ is permissible, then at the point $x_c$ where the trough of the underlying traveling wave is glued to the zero state at $x\geq x_c$ (respectively $x\leq x_c$) we have
\begin{equation}\label{compsing}
u^c(x,t)\sim(x-x_c)^{\frac2{n-1}}H(x-x_c) \textrm{ as } x\rightarrow x_c.
\end{equation}
 Since we assume that $n\geq2$, then for traveling compactons
$u^c_{3x}\notin L^\infty$.
\vspace{5mm}
\\
Note that the \cmab equations admit the two conserved quantities:
\begin{equation}
I_1=\int udx,\:I_\omega=\frac 1{\omega}\int u^{\omega}dx.
\end{equation}
For certain subcases of the \cmab equations, additional conservation laws are available \cite{hamilcmab}.
\section{Main results}\label{sec:estimates}
We consider the following initial value problem for the ${\cal C}_1(m,a,b)$, $m,n\geq2$ equations
\begin{subnumcases}{\label{Cp}}
  u_{t} +(u^m)_{x} + \frac{1}{b}[u^{a}(u^{b})_{xx}]_{x}=0, & $t\in(0,T],x\in X$,\label{ppde}
  \\
        u(x,0)=u_0(x), & $x\in X$,\label{pinit}
\end{subnumcases}
where either $X=\mathbb{R}$ or $X=[0,L]$. In the latter case (\ref{Cp}) is appended with periodic boundary conditions.
\begin{definition}\label{def}
A strong solution of the initial value problem (\ref{Cp}) is a function $u\in C([0,T]:L^1(X))\cap C([0,T]:W^{3,\infty}(X))$ such that
\begin{equation*}
u_t(\cdot,t)\in L^1(X),\; \forall t\in(0,T]
\end{equation*}
which satisfies (\ref{pinit}) and the PDE (\ref{ppde}) a.e. in $X\times(0,T]$.
\end{definition}
At times we will focus on the $K(m,n)$, $m,n\geq2$ subcase of (\ref{Cp})
\begin{subnumcases}{\label{kmp}}
  u_{t} +(u^m)_{x} + (u^{n})_{3x}=0, & $t\in(0,T],x\in X$\label{kppde}
  \\
        u(x,0)=u_0(x), & $x\in X$\label{kpinit}
\end{subnumcases}
 keeping the same definition of the strong solution. Note that since $n\geq2$ is assumed, by (\ref{compsing}) the traveling compactons are not strong solutions.
\subsection{A priori estimates}
We now proceed to prove a Lemma which ensures that strong solutions of the initial value problem (\ref{Cp}) cannot escape the initial support of $u_0$. Thus the Lemma implies a waiting time property of strong solutions of the \cmab equations, already noted by us in \cite{stationary}. This is the first such rigorous result within the realm of dispersive equations.
\begin{lemma}\label{support}
Let $u(x,t)$ be a strong solution of (\ref{Cp}).
Assume there is a closed interval $I$  such that $u_0(x)=0,\forall x\in I$. Then $u(x,t)=0,\forall x\in I,t\in[0,T]$.
\end{lemma}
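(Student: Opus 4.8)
The plan is to run a localized weighted energy estimate and close it with Gronwall's inequality, the whole argument turning on the degeneracy of the dispersion. First I would reduce the claim to compact subintervals: since $u(\cdot,t)$ is continuous in $x$, it suffices to prove $u\equiv 0$ on $[c,d]\times[0,T]$ for every closed $[c,d]\subset\mathrm{int}(I)$, and then let $[c,d]$ exhaust $\mathrm{int}(I)$. Fix such an interval together with a cutoff $\psi\in C_c^\infty(\mathrm{int}(I))$ with $\psi\ge 0$ and $\psi\equiv 1$ on $[c,d]$, and set $E(t)=\tfrac12\int_X u^2\psi\,dx$. Because $\psi$ is supported where $u_0=0$, we have $E(0)=0$; moreover $E$ is finite and absolutely continuous in $t$ by the strong-solution hypothesis $u\in C([0,T]:L^1(X))\cap C([0,T]:W^{3,\infty}(X))$ with $u_t\in L^1(X)$, which also supplies a uniform bound $\|u(\cdot,t)\|_{W^{3,\infty}(X)}\le K$ on $[0,T]$.

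Writing (\ref{ppde}) in conservation form $u_t+F_x=0$ with
\begin{equation*}
F=u^m+(b-1)\,u^{n-2}u_x^2+u^{n-1}u_{xx}
\end{equation*}
(expand $\tfrac1b u^a(u^b)_{xx}$ and use $n=a+b$), I would differentiate $E$, use the equation, and integrate by parts, the boundary terms vanishing since $\psi$ has compact support, to reach
\begin{equation*}
E'(t)=\int_X F\,(u_x\psi+u\psi_x)\,dx .
\end{equation*}
Substituting $F$ and integrating by parts once more to eliminate $u_{xx}$, every resulting integrand carries a factor $u^{m}$, $u^{n-1}$, or $u^{n-2}$. Using $\|u\|_{W^{3,\infty}}\le K$, all terms proportional to $\psi$ that retain at least two powers of $u$ are bounded by $C(K)\int u^2\psi\,dx=C(K)E(t)$, which is precisely the form Gronwall requires. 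For $n\ge 3$ even the cubic term $\bigl(b-1-\tfrac{n-1}{2}\bigr)\int u^{n-2}u_x^3\psi\,dx$ can be tamed purely by integration by parts, writing $u^{n-2}u_x=\tfrac1{n-1}(u^{n-1})_x$ to trade a factor of $u_x$ for an extra power of $u$ and a bounded derivative, leaving multiples of $\int u^2\psi\,dx$ plus terms localized in the collar $\mathrm{supp}\,\psi_x$.

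The hard part, and the point where dispersive equations ordinarily fail to have this property, is that there is no dissipative term available to absorb the remaining genuinely nonlinear pieces, so their control must come entirely from the degeneracy, i.e. from the vanishing of the flux coefficients at the free boundary. Two families resist the naive estimate: the collar terms weighted by $\psi_x,\psi_{xx}$, where $|\psi_x|$ is not dominated by $\psi$ and so the bound cannot be reabsorbed into $E$ in the obvious way; and, in the borderline case $n=2$, the cubic term, which reduces to $\int u_x^3\psi\,dx$ with no compensating power of $u$ because the dispersive flux does not vanish at $u=0$. I expect these to be the main obstacle. For $n=2$ I would switch to the localized $L^1$ quantity $\int |u|\psi\,dx$, whose time derivative only sees $F\psi_x$, or invoke a Glaeser-type inequality $u_x^2\le 2K|u|$ on any interval where $u$ keeps a fixed sign; the collar terms I would handle by a careful, possibly nested, choice of weight exploiting that each collar integrand already carries two powers of $u$. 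Once the differential inequality $E'(t)\le C(K)E(t)$ is secured, Gronwall together with $E(0)=0$ forces $E\equiv 0$, hence $u\equiv 0$ on $[c,d]$; letting $[c,d]\uparrow\mathrm{int}(I)$ and using continuity yields $u\equiv 0$ on all of $I$ for every $t\in[0,T]$.
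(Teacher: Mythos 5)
Your proposal does not close. You correctly sense where the difficulty lies, but you leave both of the obstacles you identify unresolved, and at least one of them is fatal to your setup. The collar terms are not a technicality: your cutoff $\psi$ is supported in $\mathrm{int}(I)$, so $\mathrm{supp}\,\psi_x$ lies in a region where, for $t>0$, you do not yet know that $u$ vanishes, and $\int u^2|\psi_x|\,dx$ is simply not dominated by $E(t)=\tfrac12\int u^2\psi\,dx$ (no choice of smooth $\psi$ makes $|\psi_x|\le C\psi$ near the edge of its support). "A careful, possibly nested, choice of weight" is exactly the step that would need to be a proof, and it is not supplied. Likewise, for $n=2$ the surviving term $\tfrac12\int u_x^3\psi\,dx$ is real, and your two proposed fixes are not carried out: Glaeser's inequality $u_x^2\le 2K|u|$ requires a sign condition that Lemma \ref{support} does not assume (nonnegativity is only established later, in Lemma \ref{positivity}, and is not a hypothesis here), and the $L^1$ quantity $\int|u|\psi\,dx$ is not differentiable in the way you would need where $u$ changes sign.

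Both obstacles are artifacts of your decision to localize with a cutoff and integrate by parts until $u_{xx}$ (and $u_{xxx}$) disappear. The paper's proof does neither. It works on the fixed interval $I$ itself, multiplies the non-conservative form of the equation by $u$, and estimates every term \emph{without} integrating by parts, so no boundary or collar terms ever arise: the term $\int_I u^{n}u_{3x}\,dx$ is bounded directly by $\|u^{n-2}u_{3x}\|_{L^\infty}\int_I u^2\,dx$, which is finite precisely because strong solutions satisfy $u\in W^{3,\infty}$ and $n\ge2$. The cubic and mixed terms are then controlled by the Gagliardo--Nirenberg interpolation inequalities on $I$,
\begin{equation*}
\norm{u_x}{3}{I}\leq C\norm{u_{3x}}{\infty}{I}^{1/3}\norm{u}{2}{I}^{2/3}+C\norm{u}{2}{I},\qquad
\norm{u_{2x}}{6}{I}\leq C\norm{u_{3x}}{\infty}{I}^{2/3}\norm{u}{2}{I}^{1/3}+C\norm{u}{2}{I},
\end{equation*}
which convert $\int_I|u_x|^3\,dx$ and $\int_I|u^{n-1}u_xu_{2x}|\,dx$ into $C\norm{u}{2}{I}^2$ with no sign assumption and no power of $u$ needed in front of $u_x^3$; this disposes of your $n=2$ case as well. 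The resulting inequality $\tfrac{d}{dt}\int_I u^2\,dx\le C\int_I u^2\,dx$ then closes by Gr\"onwall exactly as you intend. In short: the mechanism is not "degeneracy of the flux at the free boundary" but the a priori $L^\infty$ bound on $u_{3x}$ combined with interpolation on $I$; if you insist on a cutoff you must still find a way to kill the collar contributions, and your proposal does not.
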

\begin{proof}
$u(x,t)$ satisfies
\begin{equation}\label{c1nc}
u_t+A_1u^{m-1}u_x+A_2u^{n-1}u_{3x}+A_3u^{n-2}u_xu_{2x}+A_4u^{n-3}u_x^3=0
\end{equation}
a.e. in $X\times(0,T]$ for constants $A_i$ depending on $m,a,b$.
Multiply (\ref{c1nc}) by $u$ and integrate over $I$
\begin{equation}
\begin{split}
\frac 12\frac d{dt}\intii{u^2}=-\Bigg[A_1&\intii{u^mu_x}+A_2\intii{u^{n}u_{3x}}\\
&+A_3\intii{u^{n-1}u_xu_{2x}}+A_4\intii{u^{n-2}u_x^3}\Bigg].
\end{split}
\end{equation}
By assumptions on solution's regularity; $\forall t\in[0,T], u(x,t)\in C^2(I)$. We have ($C_i=\textrm{consts},i=1,..,9$)
\begin{equation}
\intii{|u^mu_x|}\leq C_1\intii{u^2}\quad\textrm{ and }\quad
\intii{|u^nu_{3x}|}\leq C_2\intii{u^2}.\\
\end{equation}
Noting the following Gagliardo Nirenberg inequalities
\begin{align*}
\norm{u_x}{3}{I}\leq C_3\norm{u_{3x}}{\infty}{I}^{1/3}\norm{u}{2}{I}^{2/3}+C_4\norm{u}{2}{I},\\
\norm{u_{2x}}{6}{I}\leq C_5\norm{u_{3x}}{\infty}{I}^{2/3}\norm{u}{2}{I}^{1/3}+C_6\norm{u}{2}{I},
\end{align*}
we obtain
\begin{equation*}
\intii{|u^{n-2}u_x^3|}\leq C_7\norm{u}{2}{I}^2
\end{equation*}
and by H\"{o}lder's inequality we have
\begin{equation*}
\intii{|u^{n-1}u_xu_{2x}|}\leq\norm{u^{n-1}}{2}{I}\norm{u_x}{3}{I}\norm{u_{2x}}{6}{I}\leq C_8\norm{u}{2}{I}^2.
\end{equation*}
Combining the above estimates, we have
\begin{equation}
\frac d{dt}\intii{u^2}\leq C_9\intii{u^2}
\end{equation}
and by Gr\"{o}nwall's Lemma and the continuity of $u$, we have
\[u(x,t)=0,\quad\forall x\in I,t\in[0,T].\]
\end{proof}
Note that we may prove uniqueness of strong solutions of the ${\cal C}_1(m,a,b)$ equations
by following the method of proof of Lemma \ref{support}.
Since numerical simulations suggest the compactons (which are not strong solutions) dominate the dynamics, this uniqueness result is not of much use and we do not further expand on this issue.\vspace{5mm}\\
The following Lemma is a direct generalization (under more stringent assumptions) of a result obtained in \cite{k22finitedifference} (a result similar in spirit also appears in \cite{ambrosesupport}).
\begin{lemma}\label{positivity}
Let $u(x,t)$ be a strong solution of (\ref{Cp}) such that $\omega\geq 2$
in (\ref{ppde}). Then if $u_0(x)$ is nonnegative, $u(x,t)$ remains nonnegative $\forall t\in(0,T]$.
\end{lemma}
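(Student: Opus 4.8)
The plan is to exhibit a nonnegative functional that is \emph{exactly conserved} by the flow and that vanishes precisely on nonnegative states; this is, in effect, the restriction of the invariant $I_\w$ to the negative part of $u$. Set
\[
\phi(u):=\frac{1}{\w(\w-1)}\,(u^-)^{\w},\qquad u^-:=\max(-u,0),\qquad \Phi(t):=\intx{\phi(u(\cdot,t))}.
\]
Because $\w\geq2$ the function $\phi$ is convex and of class $C^2$ (for $\w=2$ it is merely $C^{1,1}$, which is enough after the regularization mentioned below), it is nonnegative, it vanishes exactly where $u\geq0$, and it obeys $\phi'(u)=-\tfrac{1}{\w-1}(u^-)^{\w-1}$ and $\phi''(u)=(u^-)^{\w-2}$. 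Since $u_0\geq0$ we have $\Phi(0)=0$, so it is enough to prove $\tfrac{d}{dt}\Phi\equiv0$: combined with $\phi\geq0$ and the continuity $u\in C([0,T]:L^1(X))$, this forces $\phi(u)\equiv0$, i.e.\ $u\geq0$ throughout $(0,T]$.

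To evaluate $\tfrac{d}{dt}\Phi=\intx{\phi'(u)\,u_t}$ I would test (\ref{ppde}) against $\phi'(u)$. The convective part $-\intx{\phi'(u)(u^m)_x}$ is the integral of the exact derivative $\tfrac{d}{dx}\Psi(u)$ with $\Psi'=m\,\phi'(u)\,u^{m-1}$, and so vanishes under the compact-support or periodic boundary conditions. For the dispersive part I would integrate by parts twice, expand $(u^b)_{xx}=b\,u^{b-1}u_{xx}+b(b-1)u^{b-2}u_x^2$, and use $n=a+b$; every term carrying $u_{xx}$ is recast through $u_xu_{xx}=\tfrac12(u_x^2)_x$, and one is left with a single cubic integral
\[
-\frac1b\intx{\phi'(u)\big[u^{a}(u^b)_{xx}\big]_x}
=-\intx{\Big[\tfrac12\,\phi'''(u)\,u^{n-1}+\big(1-\tfrac{\w}{2}\big)\phi''(u)\,u^{n-2}\Big]u_x^{3}}.
\]
The decisive point is that the bracketed coefficient vanishes \emph{identically} for the chosen $\phi$: on $\{u<0\}$ one has $\phi''(u)=(u^-)^{\w-2}$ and $\phi'''(u)=-(\w-2)(u^-)^{\w-3}$, and substituting these together with $\w=b+1-a$ collapses the bracket to $0$. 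This is no accident: $\phi$ was reverse-engineered by imposing $\phi'''/\phi''=(\w-2)/u$ on $\{u<0\}$, whose solution is exactly the power $(u^-)^{\w}$, and which admits a convex $C^2$ (at worst $C^{1,1}$) primitive precisely when $\w\geq2$. Hence both contributions vanish and $\tfrac{d}{dt}\Phi=0$.

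The main obstacle is not this algebra but the justification of the two integrations by parts under the stated regularity, since $u\in W^{3,\infty}$ gives $u_{3x}$ only in $L^\infty$ and $\phi$ is not $C^3$ at $u=0$ (indeed $\phi'''$ is unbounded for $2<\w<3$, and $\phi''$ jumps when $\w=2$). I would resolve this either by replacing $\phi$ with smooth convex approximants $\phi_\varepsilon$ and passing to the limit, or by performing the integrations by parts on the open set $\{u<0\}$, which is a countable union of intervals on each of which $u$ is smooth and vanishes at the endpoints, so that all boundary terms drop. Here the hypotheses $n\geq2$ and $\w\geq2$ do their real work: the products that appear, $\phi''(u)u^{n-2}$ and $\phi'''(u)u^{n-1}$, are, up to a constant and a sign, equal to $(u^-)^{\,\w+n-4}$, hence bounded because $\w+n-4\geq0$, so every integrand is dominated and the limit is legitimate. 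With $\tfrac{d}{dt}\Phi=0$ and $\Phi(0)=0$ in hand, the conclusion $u\geq0$ follows.
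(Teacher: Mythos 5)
Your proof is correct and follows essentially the same route as the paper's: your multiplier $\phi'(u)\propto (u^-)^{\omega-1}$ is, up to sign and normalization, the paper's $u_-^{\omega-1}$, and both arguments reduce to showing that $I_\omega$ restricted to the negative part of $u$ is conserved, hinging on the same identity $\omega-1=b-a$. The only difference is organizational: the paper recognizes the dispersive contribution as an exact $x$-derivative after a single integration by parts, while you integrate by parts twice and check that the residual cubic coefficient cancels.
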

\begin{proof}
Let $u_-=H(-u)u$ be the negative part of $u$. Then $u_-\in C\left(X\times[0,T]\right)$ and $u_x(\cdot,t),u_t(\cdot,t)$ may have jump discontinuities. We multiply (\ref{ppde}) by $u_-^{\omega-1}$ and integrate by parts over $X$
\begin{equation}\label{nonnegint}
\intx{\left[u_-^{\omega-1}u_t-\left(u_-^{\omega-1}\right)_x\left(u^m+\frac{1}{b}u^a(u^b)_{xx}\right)\right]}=0.
\end{equation}
The value of the integral in (\ref{nonnegint}) does not change if $u$ is replaced by $u_-$
\begin{equation*}
\intx{\left[\frac 1\omega(u_-^\omega)_t-\frac{\omega-1}{m+\omega-1}(u_-^{m+\omega-1})_x-\frac{\omega-1}{2b^2}\left(\left[(u_-^b)_x\right]^2\right)_x\right]}=0.
\end{equation*}
At a point $a$ where $(u_-(\cdot,t))_x$ is discontinuous, we have
\[[u_-(\cdot,t)]\sim(x-x_c)H(x-x_c) \textrm{ as } x\rightarrow x_c.\]
It follows that \[\left[(u_-^b)_{x}\right]^2\sim(x-x_c)^{2b-2}H(x-x_c) \textrm{ as } x\rightarrow x_c\]
and since $b\geq2$, $\left[(u_-^b(\cdot,t))_{x}\right]^2$ and $u_-^{m+\omega-1}(\cdot,t)$ are $C^1(X)$.
Thus $\frac d{dt}I_\omega(u_-)=0$ and the result follows.
\end{proof}
For $u_0(x)$ with a compact support we now define
\begin{equation*}
x_0=\inf\left\{x:u_0(x)\neq 0\right\},\; x_1=\sup\left\{x:u_0(x)\neq 0\right\},\; d=x_1-x_0.
\end{equation*}
We have established that strong solutions of (\ref{Cp}) with $u_0(x)$ compactly supported and nonnegative must wait at the fronts of $u_0(x)$ at $x_0,x_1$ and remain nonnegative $\forall t\in(0,T]$.
Now we proceed to the next step and prove that strong solutions of the $K(m,n)$ equations which initially are nonnegative and compactly supported, must lose their regularity {\it in a finite time}. As observed numerically, this loss of regularity is a prerequisite for the emergence of compactons.\vspace{5mm}
\begin{theorem}\label{finitetime}
(Loss of regularity) 
Let $u(x,t)$ be a strong solution of (\ref{kmp}). Assume $u_0(x)$ to be nonnegative, compactly supported and nontrivial (in the periodic case; $0<x_0,x_1<L$). Then
\begin{equation}\label{upb1}
T\leq \frac{dI_1(u_0)-\intia{xu_0(x+x_0)}}{I_1(u_0)^md^{1-m}}
\end{equation}
is finite.
\end{theorem}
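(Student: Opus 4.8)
The plan is to monitor the first moment (center of mass) of the solution and exploit that its motion is fed only by the convective term while the solution remains trapped in a fixed finite window. Set $M(t)=\int_X x\,u(x,t)\,dx$. This quantity is finite and differentiation under the integral sign is legitimate: by Lemma \ref{support} the solution $u(\cdot,t)$ is supported in the fixed bounded interval $[x_0,x_1]$, on which the weight $x$ is bounded, and by Definition \ref{def} we have $u_t(\cdot,t)\in L^1(X)$ for each $t$. Recall also that $I_1=\int_X u\,dx$ is conserved, so $I_1=I_1(u_0)$ throughout.

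First I would compute, using the PDE (\ref{kppde}),
\[
M'(t)=\int_X x\,u_t\,dx=-\int_X x\big[(u^m)_x+(u^n)_{3x}\big]\,dx.
\]
Integrating the convective term by parts once gives $-\int_X x\,(u^m)_x\,dx=\int_X u^m\,dx$, and two integrations by parts send the dispersive term to $-\big[x(u^n)_{2x}-(u^n)_x\big]$ evaluated at the endpoints of the support. The crux is that these boundary contributions vanish. Since $u\in W^{3,\infty}(X)\subset C^2(X)$ is confined to $[x_0,x_1]$, continuity forces $u=0$ at $x_0,x_1$; and because $u\ge 0$ by Lemma \ref{positivity} attains its minimum there, $u_x=0$ at $x_0,x_1$ as well. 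Hence $(u^n)_x=nu^{n-1}u_x$ and $(u^n)_{2x}=n(n-1)u^{n-2}u_x^2+nu^{n-1}u_{2x}$ both vanish at the fronts for every $n\ge 2$, and therefore $M'(t)=\int_X u^m\,dx$.

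Since $u(\cdot,t)$ is nonnegative and supported on a set of measure at most $d$, H\"older's inequality gives $I_1\le\big(\int_X u^m\,dx\big)^{1/m}d^{\,1-1/m}$, whence $\int_X u^m\,dx\ge I_1(u_0)^m d^{\,1-m}$ and so the differential inequality $M'(t)\ge I_1(u_0)^m d^{\,1-m}$. Integrating over $[0,T]$ yields $M(T)\ge M(0)+T\,I_1(u_0)^m d^{\,1-m}$. On the other hand, confinement to $[x_0,x_1]$ together with nonnegativity forces $M(T)=\int_{x_0}^{x_1}x\,u(x,T)\,dx\le x_1 I_1(u_0)$. Combining the two bounds and solving for $T$,
\[
T\le\frac{x_1 I_1(u_0)-\int_X x\,u_0\,dx}{I_1(u_0)^m d^{\,1-m}},
\]
and substituting $x\mapsto x+x_0$ in the remaining moment integral and using $x_1=x_0+d$ recasts the numerator as $d\,I_1(u_0)-\int_0^d x\,u_0(x+x_0)\,dx$, which is precisely (\ref{upb1}).

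The main obstacle, and the step deserving the most care, is the rigorous vanishing of the dispersive boundary terms. This is exactly where Lemmas \ref{support} and \ref{positivity} are indispensable: without confinement one could not express $M(t)$ as an integral over a fixed finite interval nor discard the dispersive endpoint terms, and without nonnegativity one could not conclude $u_x=0$ at the fronts, which is what annihilates the dispersive contribution in the borderline case $n=2$. One should also record that the strong-solution regularity $u\in C([0,T]:W^{3,\infty}(X))$ with $u_t\in L^1(X)$ suffices to justify both the differentiation under the integral and the integrations by parts a.e.\ in $t$; this verification is routine but worth stating explicitly.
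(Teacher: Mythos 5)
Your proposal is correct and follows essentially the same route as the paper: track the first moment $\int xu\,dx$, use Lemmas \ref{support} and \ref{positivity} to confine the solution and kill the boundary terms, bound $\int u^m\,dx$ from below via H\"{o}lder and conservation of $I_1$, and compare the resulting linear growth with the confinement bound $x_1 I_1(u_0)$. The only cosmetic difference is that you invoke nonnegativity to get $u_x=0$ at the fronts, whereas the paper obtains the vanishing of all boundary terms directly from Lemma \ref{support} (the solution is identically zero outside $[x_0,x_1]$, so all derivatives up to second order vanish there by $C^2$ continuity).
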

\begin{proof}
By Lemma \ref{positivity}, $u(x,t)$ is nonnegative for $0\leq t\leq T$.
By Lemma \ref{support}
\begin{equation*}
x_0\leq\inf\left\{x:u(x,t)\neq 0\right\},x_1\geq\sup\left\{x:u(x,t)\neq 0\right\},\quad \forall\; 0\leq t\leq T.
\end{equation*}
Multiply (\ref{kppde}) by $x$ and integrate over $X$ to get
\begin{align*}
0=&\intx{x\left[u_t+(u^m)_x+(u^n)_{3x}\right]}=\inti{x\left[u_t+(u^m)_x+(u^n)_{3x}\right]}=\\
=&\frac d{dt}\inti{xu}+x(u^m+(u^n)_{xx})\Big|^{x_1}_{x_0}-\inti{\left[u^m+(u^n)_{xx}\right]}=\\
=&\frac d{dt}\inti{xu}-\inti{u^m}
\end{align*}
Note that Lemma \ref{support} assures that all boundary terms vanish. Thus
\begin{equation}\label{cetermassspeed}
\frac d{dt}\inti{xu(x,t)}=\inti{|u^m(x,t)|}, \quad \forall\; 0\leq t\leq T.
\end{equation}
By H\"{o}lder's inequality
\begin{equation}\label{speedlb}
\inti{|u^m(x,t)|}\geq I_1(u_0)^md^{1-m},\quad \forall\; 0\leq t\leq T
\end{equation}
where the RHS is a positive constant.
Thus
\[\inti{xu(x,t)}\geq \left[I_1(u_0)^md^{1-m}\right]t+\inti{xu_0(x)}, \quad \forall\; 0\leq t\leq T\]
and
\[\inti{xu(x,t)}\leq x_1I_1(u_0),\quad \forall\; 0\leq t\leq T,\]
so that
\[T\leq \frac{dI_1(u_0)-\intia{xu_0(x+x_0)}}{I_1(u_0)^md^{1-m}}.\]
\end{proof}
For the $K(n+1,n)$ we may derive a better bound:
\begin{corollary}\label{knp1n}
Let $u(x,t)$ be a strong solution of (\ref{kmp}) where $m=n+1$. Assume $u_0(x)$ to be nonnegative, compactly supported and nontrivial (in the periodic case; $0<x_0,x_1<L$). Then
\begin{equation}\label{upb2}
T\leq \frac{dI_1(u_0)-\intia{xu_0(x+x_0)}}{\omega I_\omega(u_0)}
\end{equation}
is finite.
\end{corollary}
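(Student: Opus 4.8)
The plan is to follow the skeleton of the proof of Theorem \ref{finitetime} and merely sharpen its single inequality: the only place where that proof loses information is the Hölder lower bound (\ref{speedlb}) on the convective flux $\inti{u^m}$. When $m=n+1$ this flux can instead be evaluated \emph{exactly} through the second conservation law, because for the $K(m,n)$ equations $\omega=n+1$, so the convective density $u^m$ coincides with the conserved density $u^\omega$.

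First I would reproduce verbatim the center-of-mass identity already established in the proof of Theorem \ref{finitetime}: multiplying (\ref{kppde}) by $x$, integrating over the confining interval $[x_0,x_1]$ supplied by Lemma \ref{support}, and discarding the boundary terms (which vanish again by Lemma \ref{support}), one obtains
\[
\frac d{dt}\inti{xu(x,t)}=\inti{u^m(x,t)},\quad \forall\,0\le t\le T,
\]
where $u\ge0$ by Lemma \ref{positivity} lets us drop the absolute value. The new ingredient is to observe that, since $m=n+1=\omega$, the right-hand side equals $\inti{u^\omega}=\omega I_\omega(u(\cdot,t))$, which by conservation of $I_\omega$ is the constant $\omega I_\omega(u_0)$. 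The identity thus collapses to the \emph{exact} ODE $\frac d{dt}\inti{xu(x,t)}=\omega I_\omega(u_0)$, which integrates immediately to $\inti{xu(x,t)}=\omega I_\omega(u_0)\,t+\inti{xu_0(x)}$.

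Finally, confinement within $[x_0,x_1]$ together with $u\ge0$ and conservation of $I_1$ gives the a priori ceiling $\inti{xu(x,t)}\le x_1 I_1(u_0)$, exactly as in Theorem \ref{finitetime}. Combining this with the integrated identity and solving for $t$ yields $T\le\bigl(x_1 I_1(u_0)-\inti{xu_0(x)}\bigr)/\bigl(\omega I_\omega(u_0)\bigr)$; the substitution $x\mapsto x+x_0$ (together with $\intia{u_0(x+x_0)}=I_1(u_0)$) rewrites the numerator as $dI_1(u_0)-\intia{xu_0(x+x_0)}$, which is precisely the stated bound (\ref{upb2}). Since the power-mean (Hölder) inequality gives $\omega I_\omega(u_0)=\inti{u_0^{n+1}}\ge I_1(u_0)^m d^{1-m}$, this denominator is no smaller than the one in (\ref{upb1}), so the bound is genuinely an improvement.

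The only real subtlety — the hard part — is the rigorous justification that $I_\omega$ (and $I_1$) are conserved for strong solutions rather than merely formally. This, however, reduces to the computation already carried out in Lemma \ref{positivity}: multiplying (\ref{kppde}) by $u^{\omega-1}$ and integrating by parts, the regularity required in Definition \ref{def} ($u\in W^{3,\infty}$) legitimizes every integration by parts, and Lemma \ref{support} forces all boundary contributions to vanish, whence $\frac d{dt}I_\omega(u)=0$. Everything else is a direct specialization of Theorem \ref{finitetime}.
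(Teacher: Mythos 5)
Your proposal is correct and follows exactly the paper's argument: the paper's proof of Corollary \ref{knp1n} likewise notes that for $m=n+1=\omega$ the right-hand side of (\ref{cetermassspeed}) equals the conserved quantity $\omega I_\omega$, and then invokes (\ref{speedlb}) to conclude the bound is an improvement. Your added remark justifying the conservation of $I_\omega$ via the integration-by-parts computation of Lemma \ref{positivity} is a reasonable elaboration of a step the paper leaves implicit, but it does not change the route.
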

\begin{proof}
Since now the RHS of (\ref{cetermassspeed}) is a constant of the motion $\omega I_\omega$, the result follows. By (\ref{speedlb}) this is a better bound.
\end{proof}
An additional bound for the $K(m,n)$ equations may be derived using a similar argument for the third moment
\begin{equation}\label{upb3}
T\leq \frac{x_1^3I_1(u_0)-\inti{x^3u_0(x)}}{6I_1(u_0)^nd^{1-n}}.
\end{equation}
{\bf Discussion:} Theorem \ref{finitetime} gives no information as to the character of the resulting singularity which develops in finite time. It does not preclude the possibility of a blowup, or a formation of a genuine shock.
Yet the numerically observed compacton has a much milder singularity (see section \ref{sec:numerical}).
For the $K(2,2)$ equation the emergence of a compacton means that the second derivative becomes discontinuous and the solution is no longer strong. Still, Theorem \ref{finitetime} gives no information on the eventual dynamics and the role of compactons. The following sheds some light on the eventual $K(n,n)$ dynamics:
\begin{lemma}\label{decompose}
\normalfont{\cite{hamilcmab}} There are compactly supported initial excitations that the flow of the $K(n,n)$ equations do not decompose solely into compactons and anticompactons (cf. solution (\ref{k22c})). These include all initial conditions with a support given on an interval of length $L<\pi$ which are smooth in the interior of their domain.
\end{lemma}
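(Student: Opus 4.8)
The plan is to pit the \emph{fixed} support length of a $K(n,n)$ (anti)compacton against the conservation of the measure of the solution's support. First I would reduce to the travelling-wave profile: writing $u=\phi(\xi)$, $\xi=x-\lambda t$, integrating $-\lambda\phi'+(\phi^n)'+(\phi^n)'''=0$ once and performing the standard phase-plane analysis of the resulting first integral yields the classical compacton $\phi=\bigl[\tfrac{2\lambda n}{n+1}\cos^2(\tfrac{(n-1)\xi}{2n})\bigr]^{1/(n-1)}$, supported exactly on $|\xi|\le \tfrac{n\pi}{n-1}$, with the anticompacton obtained by the sign reversal as in \eqref{k22c}. The structural point I would emphasise, and which I would justify from the scaling symmetry of $K(n,n)$ (the equation admits no nontrivial spatial rescaling, so amplitude and width decouple), is that this support length $\tfrac{2n\pi}{n-1}$ is independent of $\lambda$ and, for every $n\ge2$, exceeds $2\pi$. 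Its half-width $\tfrac{n\pi}{n-1}$ already exceeds $\pi$, so the mere ascent of a compacton from a zero to its crest occupies horizontal length greater than $\pi$; this is what makes the threshold $L<\pi$ in the statement natural.

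Next I would invoke conservation of the measure of the support (the first main result of \cite{ambrosesupport}): starting from $u_0$ supported on an interval of length $L<\pi$, the measure of $\operatorname{supp}u(\cdot,t)$ stays equal to $L<\pi$ for all $t$. Suppose, contrary to the claim, that at some large time the flow were a superposition consisting \emph{solely} of compactons and anticompactons. Since the collection is nonempty (at least one constituent must carry the conserved, nonvanishing $I_\omega$ with $\omega=n+1$ when $n$ is odd, or the mass $I_1$ after normalising the sign), the support of $u(\cdot,t)$ contains the support of at least one such wave, whence its measure is at least $\tfrac{2n\pi}{n-1}>2\pi$. This contradicts $L<\pi$. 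I would stress that routing the argument through the support measure rather than through $I_1,I_\omega$ alone is deliberate: a Hölder/conserved-quantity argument can be defeated by mass cancellation (e.g.\ a compacton and an anticompacton with $I_1=0$), whereas the measure argument is immune, since a single constituent already forces the measure above $2\pi$.

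The hard part is not the inequality but the regime in which it must be applied. By Theorem \ref{finitetime} the strong solution is driven out of the strong class in finite time, so the decomposition into compactons — themselves not strong solutions — lives in a weak/post-singularity regime; the main obstacle is therefore to ensure that a meaningful notion of solution, and with it the conservation of the support measure, persists past the loss of regularity, and to give ``decomposition solely into compactons and anticompactons'' a precise (asymptotic) meaning for which the support of $u(\cdot,t)$ genuinely contains the supports of its constituents. A secondary technical point to dispose of is the degenerate case of sign-symmetric data for which $I_1$ and $I_\omega$ vanish simultaneously, so that the nonemptiness of the collection must be secured from the conservative (Hamiltonian) nature of the flow rather than from $I_1,I_\omega$; once these measure-theoretic and solution-concept issues are settled, the contradiction closes the proof.
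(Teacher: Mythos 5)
Your argument rests on the conservation of the measure of the solution's support, and that is exactly where it breaks. The result you cite from \cite{ambrosesupport} is a statement about sufficiently regular solutions; by Theorem \ref{finitetime} such solutions cease to exist in finite time, and any decomposition into compactons --- which are not strong solutions --- necessarily takes place in the post-singularity regime. In that regime the invariant is simply false: the entire narrative of the paper (and its numerics in section \ref{sec:numerical}) is that once $u_{xx}$ jumps at the edge ``the support is breached and a compacton emerges,'' after which the solution propagates into the vacuum; a train of compactons separating at distinct speeds has support of unboundedly growing measure. You flag the persistence of the support-measure conservation past the loss of regularity as ``the main obstacle,'' but it is not a technicality to be settled later --- it is the step that fails, and with it the contradiction $L<\pi<2\pi\le\frac{2n\pi}{n-1}$ evaporates. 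A telltale symptom is that your mechanism, if it worked, would prove the conclusion for every $L<2\pi$ (indeed for any data whose support measure is below one compacton width), whereas the lemma's threshold is $\pi$; a proof that overshoots the stated threshold by a factor of two for free should make you suspect you are not using the mechanism the statement is built on.

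The paper itself defers the proof to \cite{hamilcmab}, but the threshold $L<\pi$ points clearly to the intended route: the $K(n,n)$ equations admit the additional linear conserved functionals $\int u\cos x\,dx$ and $\int u\sin x\,dx$ (integrating by parts, $\partial_x(1+\partial_x^2)$ annihilates $\cos x$ and $\sin x$). Both functionals vanish on every compacton and anticompacton, for any amplitude and any center (for the profile supported on $|\xi|\le\frac{n\pi}{n-1}$ one checks $\int\cos^{2/(n-1)}\left(\tfrac{(n-1)\xi}{2n}\right)\cos\xi\,d\xi=0$, and translation only mixes the two moments linearly), hence on any superposition of them --- well separated or not, and without any appeal to support measures. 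On the other hand, for nontrivial sign-definite data supported on an interval of half-length less than $\pi/2$ centered at $c$, one has $\cos(x-c)>0$ on the support, so $\int u_0\cos(x-c)\,dx\neq0$ and the conserved moment vector is nonzero; this is where $\pi$ enters. Unlike the support measure, these are linear functionals that make sense for, and are additive over, whatever weak or asymptotic states the flow produces, so the contradiction closes without the solution-concept difficulties your route cannot avoid. Your phase-plane derivation of the compacton profile and its $\lambda$-independent width is correct, but it feeds an invariant that does not survive the singularity.
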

The proof of Lemma \ref{decompose} is presented in \cite{hamilcmab}.
\subsection{General ${\cal C}_{1}(m,a,b)$ equations ($a\neq0$)}
Multiply (\ref{ppde}) by $xu^{b-a}$ and integrate by parts to obtain
\begin{equation}\label{cmabfm}
\frac d{dt}\inti{\frac{xu^\omega(x,t)}\omega}=\frac m{m+b-a}\inti {u^{m+b-a}}-\frac{3b-a}{2b^2}\inti{[(u^b)_x]^2}.
\end{equation}
However, it is not obvious whether the RHS is bounded from below by a constant of the motion. When $\omega=2$ we have an Hamiltonian \cite{Rosenau03}
\begin{equation*}
\mathcal{H}(u)=\int \left[\frac 1{m+1}u^{m+1}-\frac 1{2b^2}\left[\left(u^b\right)_x\right]^2\right]dx,\quad
u_t=-\partial_x\frac{\delta}{\delta u}\mathcal{H}(u)
\end{equation*}
and (\ref{cmabfm}) reduces to
\begin{equation}
\frac d{dt}\inti{\frac{xu^2(x,t)}2}=\frac {m-2a-3}{m+1}\inti {u^{m+1}(x,t)}+(2a+3)\mathcal{H}(u_0)
\end{equation}
which, depending on the sign of $m-2a-3$ and $\mathcal{H}(u_0)$, can be analysed by similar arguments as in the proof of Theorem \ref{finitetime}. In the distinguished $m=2a+3$\cite{stationary} case, for a nonzero initial Hamiltonian we obtain again that the initially smooth excitations must lose their regularity.
\subsection{Additional equations amenable to analysis}
We proceed to specify additional classes of degenerate evolution equations amenable to a similar analysis:
\subsubsection{An extended Burger's equation}
Given an extended Burger's equation
\begin{equation}\label{degburgers}
u_t+(u^m)_x-(u^n)_{xx}=0,\quad m,n\geq2,
\end{equation}
an equivalent of Lemma \ref{support} can be derived replacing $W^{3,\infty}(X))$ with $W^{2,\infty}(X))$ in definition \ref{def} and using appropriate Gagliardo Nirenberg inequalities. The proofs of Lemma \ref{positivity} and Theorem \ref{finitetime} are also essentially the same. However, for eq. (\ref{degburgers}) much stronger results have already been established, see for instance \cite{freeboundaryenergybook}.
\subsubsection{An extended quintic dispersion equation}
For an extended quintic dispersion equation\cite{quintic,quinticwindow,cooperquintic} $\mathcal{Q}(m,a_1,b_1,a_2,b_2)$:
\begin{equation}\label{degquintic}
u_t+(u^m)_x+\left[u^{a_1}(u^{b_1})_{xx}\right]_{x}+\delta\left[u^{a_2}(u^{b_2})_{4x}\right]_{x}=0,\quad 
a_1,a_2\geq0,\;m,b_1,b_2\geq2
\end{equation}
an equivalent of Lemma 1 can be derived replacing $W^{3,\infty}(X))$ with $W^{5,\infty}(X))$ in definition \ref{def} and using appropriate Gagliardo Nirenberg inequalities. The stronger assumption is not only sufficient for the proof's technicalities, but also necessary since explicit {\it traveling} compacton solutions of (\ref{degquintic}) have been found for which $u_{3x}\in L^\infty$ \cite{quintic}.\\
If we focus on the $a_1=a_2,b_1=b_2$ case, then to emulate Lemma 2 a higher nonlinearity is needed, e.g. $b_2+1-a_2\geq4\Rightarrow b_2\geq3$.\\
Thus for $a_1=a_2=0$ and $b_1=b_2\geq3$ it is straightforward to prove an equivalent of Theorem \ref{finitetime}.
\subsubsection{A $n$-Cubic equation}
For a $n$-Cubic equation \cite{bicubic}
\begin{equation}\label{bicub}
v_t+\left[(v+v_{xx})^n\right]_x=0,\quad n\geq2
\end{equation}
an equivalent of Lemma \ref{support} can be proved by assuming in definition \ref{def} that  $v\in W^{5,\infty}(X))$  and using appropriate Gagliardo Nirenberg inequalities. Again the stronger assumption is necessary since (\ref{bicub}) has the traveling solution
\[v^c(x,t)=\frac{16\lambda}9\cos^4\left(\frac{x-\lambda t}{4}\right)H(2\pi-|x-\lambda t|),\quad v^c_{3x}\in L^\infty.\]
However, Lemma \ref{positivity} does not carry over immediately since, except for $p=1$, the $n$-Cubic equation does not conserve $\int v^p(x,t)dx$. \\
Note that $u=v+v_{xx}$ satisfies formally the $K(n,n)$ equation\cite{bicubic}.
Thus, a similar result concerning loss of regularity in finite time of solutions of (\ref{bicub}) may be obtained by assuming sufficient initial regularity such that $u=v+v_{xx}$ satisfies the conditions of Theorem \ref{finitetime} and arguing by contradiction.\\
\subsubsection{The $K(m,n)$ equations for $1<m,n<2$}
Since for traveling compactons $u^c(x,t)\sim(x-x_c)^{\frac2{n-1}}H(x-x_c)$ with $x_c$ being the location of compacton's front, $u^c$ gains smoothness as $n$ decreases. Obviously, Lemma \ref{support} cannot hold in this case, as traveling compactons may have smooth third derivatives. For instance, the $K(3/2,3/2)$ equation admits the traveling compacton
\[u^c(x,t)=\frac{36\lambda^2}{25}\cos^4\left(\frac{x-\lambda t}{6}\right)H(3\pi-|x-\lambda t|),\]
which has continuous third order derivatives.
\\Yet, assuming the boundedness of higher than third order derivatives in Definition 1 one may replicate Lemma \ref{support}.
\section{Numerical examples}\label{sec:numerical}
Though the discussion in section \ref{sec:estimates} is independent of the unsettled issue of existence we have ample numerical evidence which confirm (and indeed have initiated) our a priori results. Simulations of (\ref{kmp}) have been carried using a Local Discontinuous Galerkin method (LDG) \cite{nodaldisbook} and a choice of numerical fluxes based on \cite{levi}.\\
In figure \ref{k22sim} we present the results of numerical study of a periodic initial value problem for the $K(2,2)$ equation with $u_0(x)=\cos^3(x/10)H(5\pi-|x|)$. The initially sufficiently smooth excitation maintains its support until the second derivative becomes discontinuous (at $6<t<8$) and $u_{3x}\notin L^\infty$. At this time the support is breached (and a compacton emerges). The values of the upper bounds on existence time are not very impressive; eq. (\ref{upb1}) yields $T\leq37.01$ and from eq. (\ref{upb3}), $T\leq1522.02$.\\
In figure \ref{k32sim}, the $K(3,2)$ equation is solved using the same initial condition. The story is qualitatively the same. Here the support is breached at $7<t<9$. Whereas Eqs. (\ref{upb1}) and (\ref{upb3}) yield $T\leq87.20$ and $1522.02$ respectively, the improved upper bound (\ref{upb2}) for the $K(n+1,n)$ equations yields a much better estimate $T\leq25.77$.
\FloatBarrier
\begin{figure}[ht]\centering
\begin{center}
 \makebox[\textwidth]{
\subcaptionbox*{}{\includegraphics[scale=0.6]{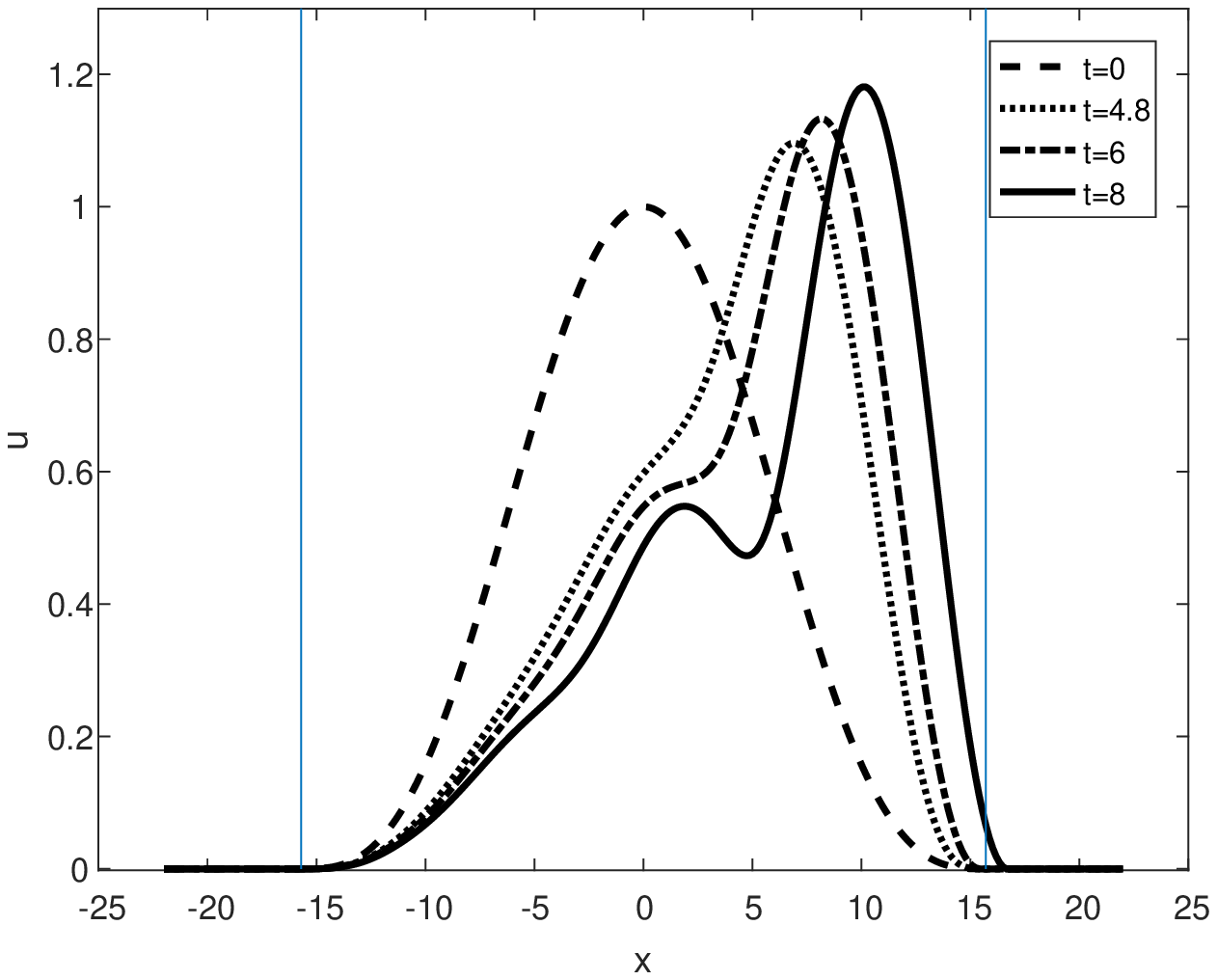}}\hspace*{-0.7cm}
\includegraphics[scale=0.6]{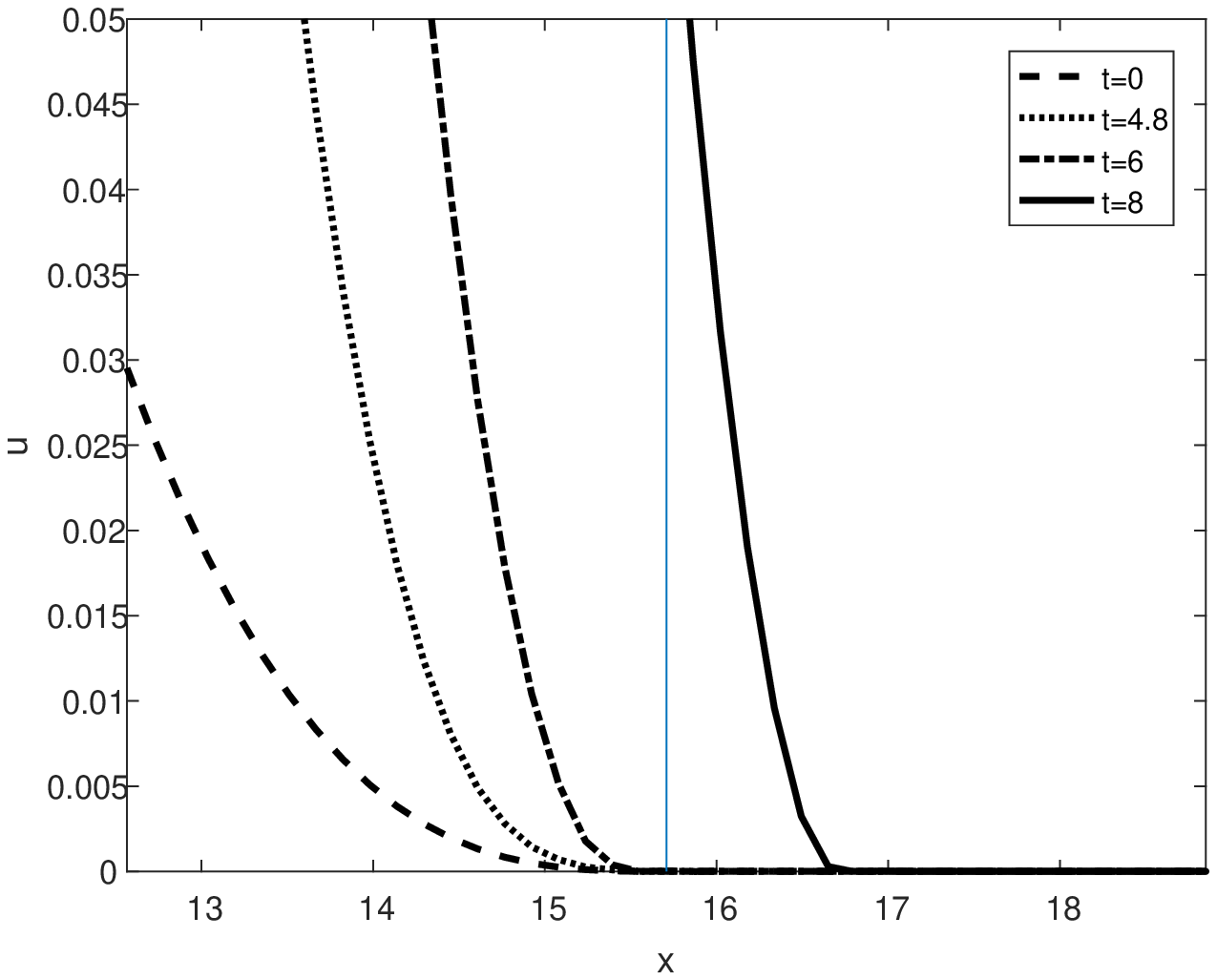}
}
\end{center}
\vspace*{-0.7cm}
\caption{Left plate: simulation of $K(2,2)$ with $u_0(x)=\cos^3(x/10)H(5\pi-|x|)$ and thus $(u_0(x))_{3x}\in L^\infty$. The initial support (vertical lines) is maintained initially while the center of mass is seen to shift to the right. Between $t=6$ and $t=8$ the second spatial derivative becomes discontinuous at the right edge, the barrier is lifted and the solution is allowed to propagate into the vacuum. Right plate: zooming in on the right front.\label{k22sim}}
\end{figure}
\FloatBarrier
\FloatBarrier
\begin{figure}[ht]\centering
\begin{center}
 \makebox[\textwidth]{
\subcaptionbox*{}{\includegraphics[scale=0.6]{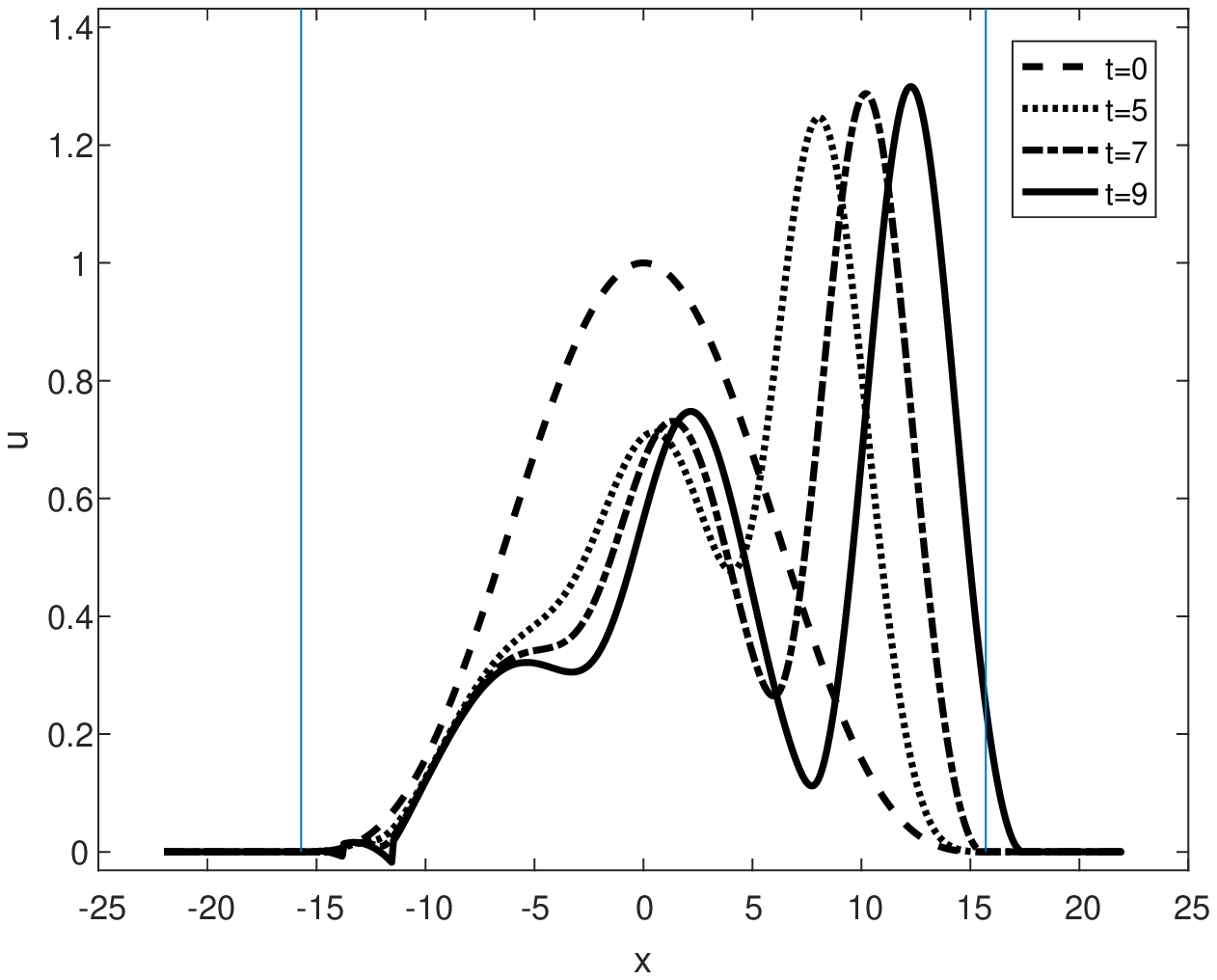}}\hspace*{-0.7cm}
\includegraphics[scale=0.6]{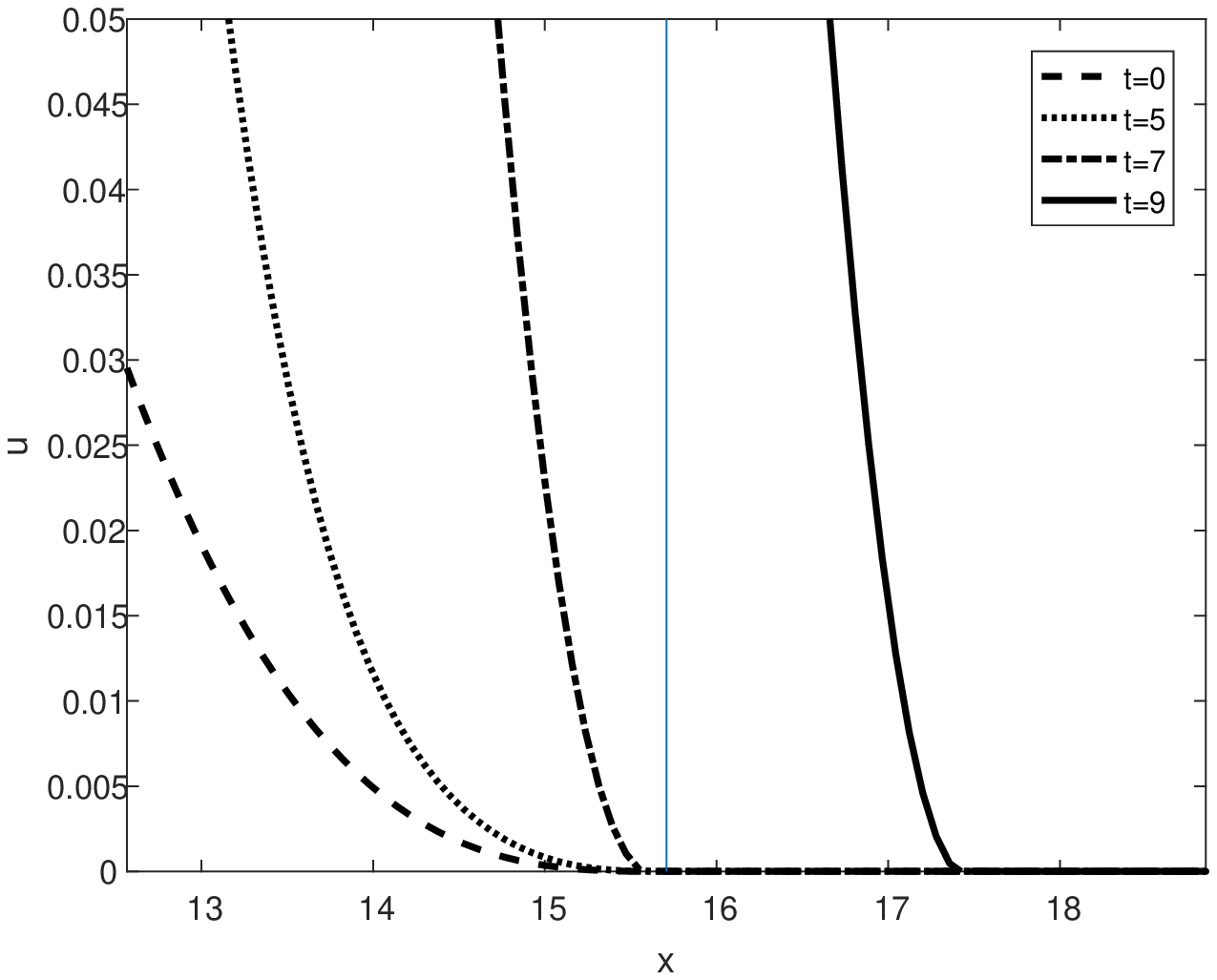}
}
\end{center}
\vspace*{-0.7cm}
\caption{Left plate: simulation of $K(3,2)$ with $u_0(x)=\cos^3(x/10)H(5\pi-|x|)$ and thus $(u_0(x))_{3x}\in L^\infty$. Though the initial support (vertical lines) is initially maintained the center of mass is seen to shift to the right. Somewhere between $7<t<9$ the second spatial derivative becomes discontinuous at the right edge, the barrier is lifted and the solution may propagate into the vacuum. Right plate: zooming in on the right front.\label{k32sim}}
\end{figure}
\FloatBarrier
\section{Summary}\label{sec:summary}
We have proved that initially smooth solutions of the $K(m,n)$ equations must lose their regularity in a finite time. This result extends the already known properties of degenerate evolution equations of lower order. Our results do not concern the actual, more singular, domain of interest of the $K(m,n)$ equations, in which compactons reside. Nevertheless, they support the notion that compactons are in a way attractors of the dynamics, in a similar fashion to classical solitons.
\bibliographystyle{unsrt}
\bibliography{C:/Users/user/Desktop/bibl}
\end{document}